
\documentclass[letterpaper, 10 pt, conference]{ieeeconf}  

\IEEEoverridecommandlockouts                              
\overrideIEEEmargins

\newcommand{\yalmip}{\texttt{YALMIP}\;}
\newcommand{\mosek}{\texttt{MOSEK}\;}

\def\R{\mathbb{R}}
\def\dd{\mathrm{d}}
\def\veps{\varepsilon}
\def\vphi{\varphi}

\def\optr{\mathrm{tr}}

\usepackage{graphics} 
\usepackage{amsmath} 
\usepackage{amssymb}  
\usepackage{booktabs}

\usepackage{placeins}
\usepackage{color}
\usepackage[hidelinks]{hyperref}

\usepackage{hyphenat}

\hyphenation{pre-sented}

\newtheorem{lemma}{Lemma}[section]
\newtheorem{remark}[lemma]{Remark}

\newtheorem{proposition}[lemma]{Proposition}

\newtheorem{definition}[lemma]{Definition}

\title{\LARGE \bf
On inverse optimal control
via polynomial optimization
}


\author{J\'er\'emy Rouot and Jean-Bernard Lasserre
\thanks{J. Rouot is with LAAS-CNRS, 7 avenue du colonel Roche, F-31400 Toulouse, France,
        {\tt\small j.rouot@laas.fr}}%
\thanks{Jean B. Lasserre is with LAAS-CNRS and Institute of Mathematics University of Toulouse
LAAS, 7 avenue du Colonel Roche 31077 Toulouse, France.
        {\tt\small lasserre@laas.fr}}%
}

\begin{document}

\maketitle
\thispagestyle{empty}
\pagestyle{empty}

\begin{abstract}
We consider the class of control systems where the differential equation, state and control system
are described by polynomials. Given a set of trajectories and a class of Lagrangians,
we are interested to find a Lagrangian in this class for which these trajectories are optimal.
To model this inverse problem we use a relaxed version of Hamilton-Jacobi-Bellman optimality conditions,
in the continuity of previous work in this vein. Then  we provide a general numerical scheme
based on polynomial optimization and positivity certificates, and illustrate the concepts
on a few academic examples.
\end{abstract}

\section{INTRODUCTION}
Given a system dynamics, control and/or state constraints, and a database $\mathcal{D}$ of feasible state-control trajectories $\{x(t;x_0,t_0),u(t;x_0,t_0)\}$ from several initial states (and different initial times) $(x_0,t_0)$, the 
{\it Inverse Optimal Control Problem} (IOCP) consists of computing a Lagrangian $L$ such that each trajectory of the database $\mathcal{D}$ is an optimal solution of the (direct) optimal control problem (OCP) for the cost functional $\int_{t_0}^TL(x(t),u(t))dt$ to minimize. Even though it is an interesting problem on its own, it is also of primary importance in Humanoid Robotics where one tries to understand human locomotion. 
In fact  whether or not human locomotion obeys some optimization principle is an open issue, discussed
for instance in several contributions in  \cite{Laumond2017}.

As most interesting inverse problems, the (IOCP) is in general an ill-conditioned and ill-posed problem. 
For instance, for a source of ill-posedness, consider an (OCP) with terminal state constraint $x(T)\in X_T$ and with Lagrangian $L$. Let $\phi:X\times [0,T]\to\R$ be its associated optimal value function and let $g:X\to\R$ be a continuously differentiable function  such that $g(x)=0$ for all $x\in X_T$. If the
couple $(x^\star(t;x_0,t_0),u^\star(t;x_0,t_0))$ is an optimal solution of the (OCP) then it is also an optimal solution of the (OCP) with new Lagrangian $L-\langle\nabla g,f\rangle$ (where $f$ is the vector field of the dynamics) and the optimal value function associated with the new (OCP) is $\phi+g$. See e.g. \cite{Pauwels2016} for a discussion.
Therefore in full generality, recovering a Lagrangian that has an intrinsic physical meaning  
might well be searching for a needle in a haystack. This is perhaps why previous works have considered
the (IOCP) with restrictive assumptions on the class of Lagrangian to recover. Some restrictions are also more technical and motivated by simplifying the search process.

In the recent work \cite{Pauwels2016} the authors have proposed 
to consider the (IOCP) in relatively general framework. In this framework an {\it $\veps$-optimality certificate} for the trajectories 
in the database $\mathcal{D}$ translates into natural ``positivity constraints" on the unknown optimal value function $\phi$ and Lagrangian $L$, in a {\it relaxed version} of the Hamilton-Jacobi-Bellman (HJB) equations. Moreover some natural normalizing constraint allows to avoid the source of ill-posedness
alluded to above.
Then the (IOCP) is posed as a {\it polynomial optimization} problem and solved via
a hierarchy of semidefinite relaxations indexed by $d\in\mathbb{N}$,
where the unknown is a couple $(\phi,L)$ of polynomials of potentially large degree $d$;
for more details the reader is referred  to \cite{Pauwels2016}.

In the cost criterion $\min \veps + \gamma\,\Vert L\Vert_1$, the regularizing parameter $\gamma>0$  controls the tradeoff between minimizing $\veps$ and
minimizing the sparsity-inducing $\ell_1$-norm $\Vert L\Vert_1$ of the Lagrangian. (One assumes that
a Lagrangian $L$ with a physical meaning will have a sparse polynomial approximation.) In some examples treated in \cite{Pauwels2016} one clearly sees the importance of $\gamma$ to recover a 
sparse meaningful Lagrangian; the larger is $\gamma$ the sparser is the Lagrangian (but at the cost
of a larger $\veps$, due to a limited computing capacity).

\subsection{Contribution}
 As in \cite{Pauwels2016} the approach in this paper is based on the relaxed-HJB framework to state
an $\veps$-optimality certificate for the trajectories in the database $\mathcal{D}$. Indeed as already mentioned in \cite{Pauwels2016}, HJB-optimality equations is the ``perfect tool" to certify global optimality of a given trajectory (Bellman optimality principle is sometimes called a {\it verification} theorem).
On the other hand, instead of searching for $L$ as a polynomial of potentially large degree as in \cite{Pauwels2016},
we now restrict the search of $L$ to some {\it dictionary} which is a family of polynomials of low degree (for instance convex quadratic). In this respect we
also follow other previous approaches (e.g. \cite{laumont2010,Pauwels2014}) which impose restrictive assumptions on the Lagrangian.
A normalizing condition on $L$ avoids some previously mentioned ill-posedness and in principle no regularizing parameter $\gamma$ is needed (that is, one only minimizes $\veps$ which appears in the $\veps$-optimality certificate). Finally, and again as in \cite{Pauwels2016}, the (IOCP) is solved by a hierarchy of semidefinite relaxations. The quality of the solution improves in the hierarchy but of course at a higher computational cost.

To compare this approach with that of \cite{Pauwels2016} we have considered
four optimal control problems (of modest size): The LQG, the Brockett double integrator, and two minimum-time problems. In all cases we obtain better results as the Lagrangian $L$ is recovered exactly 
at a semidefinite relaxation of relatively low order the hierarchy. In particular no regularizing parameter $\gamma$ is needed. At last but not least, a by-product of the approach is to also provide a means to detect
whether the {\it dictionary} of possible Lagrangian is large enough. Indeed if the resulting optimal value $\veps^\star$
remains relatively {\it high}  after several semidefinite relaxations, one may probably infer that
the dictionary is too small to contain a good Lagrangian.

In section II we define the (IOCP) while the relaxed Hamilton-Jacobi-Bellman optimality
conditions are presented in Section III (with some discussion on the ill-posedness of the problem).
The numerical scheme consisting of a hierarchy of semidefinite relaxations is described in Section IV. The method is illustrated on four academic examples and compared with \cite{Pauwels2016}, before some 
discussion of the relative merits of the method.
 
\section{Setting up the problem}

\subsection{Context}

Let $n$ and $m$ be nonzero integers.
Consider the control system
\begin{equation} \label{eq:dyn}
\dot x(t) = f(x(t),u(t))
\end{equation}
where $f:\mathbb{R}^n\times \mathbb{R}^m$ is a (smooth) vector field, and $u:[t_0,T]\rightarrow U \text{ (compact) } \subset \mathbb{R}^m$ are the controls. A control $u(\cdot)$ is said admissible on $[t_0,T]$ ($T>t_0>0$) if 
the solution $x(\cdot)$ of \eqref{eq:dyn} satisfies $x(t_0)=x_0$ for some $x_0 \in \mathbb{R}^n$ and
\begin{equation} \label{eq:constraintXU}
(x(t),u(t))\in X\times U \; \text{a.e. on } [t_0,T], \quad x(T) \in  X_T
\end{equation}
where $X,X_T$ are compact subsets of $\mathbb{R}^n$. The set of admissible controls is denoted 
as $\mathcal{U}$. Let us denote by $(\tau,x(\cdot),u(\cdot))$ a feasible trajectory for (\ref{eq:dyn})-(\ref{eq:constraintXU}) which starts at time $\tau\in [0,T]$ in state $x(\tau)$.
For an admissible control $u(\cdot)$, we consider only integral costs associated with the corresponding 
trajectory $x(\cdot)$,
and defined by :
\begin{equation} \label{eq:cost}
J(t_0,T,x_0,u(\cdot),L) = \int_{t_0}^T L(x(t),u(t))\, \dd t,  
\end{equation}
where $L:\mathbb{R}^n\times \mathbb{R}^m\rightarrow \mathbb{R}$ is a continuous 
Lagrangian. 

\paragraph{Direct optimal control problem (OCP)}
Given a Lagrangian $L$ and an initial condition $x_0\in \mathbb{R}^n$, the (OCP) consists in 
determining a state-control trajectory $(t_0,x(\cdot),u(\cdot))$ solution of \eqref{eq:dyn} with $x(t_0)=x_0$, satisfying the state and control constraints \eqref{eq:constraintXU} and minimizing the cost \eqref{eq:cost}. 
We will refer to this problem as $OCP(t_0,x_0,L)$.\\
The value function $J_L^\star$ associated with $OCP(t_0,x_0,L)$ is defined by
\begin{equation} \label{eq:value-fct-fixed}
J^\star_L(t_0,x_0) =  \inf_{u(\cdot) \in \mathcal{U}}  \; J(t_0,T,x_0,u(\cdot),L)
\end{equation}
if the final time $T$ is fixed, and,
\begin{equation} \label{eq:value-fct-nonfixed}
J^\star_L(x_0) =  \inf_{T,u(\cdot) \in \mathcal{U}}  \; J(0,T,x_0,u(\cdot),L)
\end{equation}
if the final time $T$ is free.

\paragraph{Inverse optimal control problem (IOCP)}
Given a set $\mathcal{D}$ of controls and 
their associated feasible trajectories (obtained by observations) satisfying (\ref{eq:constraintXU}), a couple defined by a 
control system \eqref{eq:dyn} and  a class (or dictionary) $\mathcal{L}$ of Lagrangians (each $L\in\mathcal{L}$ defines a cost functional \eqref{eq:cost}),
the (IOCP) consists in finding a Lagrangian $L\in \mathcal{L}$
such that the trajectories of  $\mathcal{D}$ are optimal solutions  of the (OCP)  associated with $L$.\\
More precisely, 
we consider a finite family 
$\mathcal{D}=\{(t_i,x_i(\cdot),u_i(\cdot))\}_{i\in I}$ of admissible trajectories, indexed by
some set $I$, where $t_i\in [0,T]$ and $u(\cdot)\in \mathcal{U}$ for all $i\in I$.
We want to compute $L\in \mathcal{L}$  such that 
$x_i(\cdot)$ (resp. $u_i(\cdot)$) are optimal state-trajectories (resp. control-trajectories) of $OCP(t_i,x_i(t_i),L),\forall i\in I$.

\section{Hamilton-Jacobi-Bellman to certify global-optimality}

\subsection{Hamilton-Jacobi-Bellman}

Hamilton-Jacobi-Bellman (HJB)  equation is the ``ideal" tool to 
provide sufficient (and in a certain sense almost necessary) conditions for a trajectory to be globally-optimal, see \cite{Bertsekas1995,Kalman1963,bardi2008}. In general, the {\it value function} is
nonsmooth and cannot be identified to a classical solution of the HJB
equation. Yet, there are many approaches to characterize the value function from the
HJB equation, among others the concept of viscosity solutions \cite{bardi2008} 
or the concept of proximal subgradient \cite{Vinter2000}.
In what follows, a  {\it relaxation} of the (HJB) equation provides a simple certificate of 
global-optimality for (OCP) of the form $OCP(t_0,x_0,L)$.

\begin{definition}
Let $L:\mathbb{R}^n\times \mathbb{R}^m\rightarrow \mathbb{R}$ and let $\vphi:[0,T]\times \mathbb{R}^n \rightarrow \mathbb{R}$ be continuously differentiable.
Given a vector field $f$, we define the linear operator $\mathcal{H}_f$ which acts on $(L,\vphi)$ as
\[
\mathcal{H}_f(L,\vphi) = L + \frac{\partial \vphi}{\partial t} + \nabla_x\vphi^\intercal f 
\]   
\end{definition}

The following certificate was at the core of the method proposed in \cite{Pauwels2016}.
\begin{proposition} \label{prop:HJB-sufficient}
Let $\bar u(\cdot)\in\mathcal{U}$ be an admissible control associated with a trajectory $\bar x(\cdot)$ starting from $x_0$ at time $t=t_0$.
Assume there exists a function $\vphi:[t_0,T]\times X\to\mathbb{R}$, continuously differentiable and
such that 
\begin{align} 
\mathcal{H}_f(L,\vphi)(t,x,u)\ge 0,\; \forall (t,x,u)\in [t_0,T]\times X\times U,\label{eq1-HJB} \\
\vphi(T,x) = 0,\; \forall x\in X_T, \label{eq2-HJB} \\
\mathcal{H}_f(L,\vphi)(t,\bar x(t),\bar u(t)) = 0,\, \forall t\in [t_0,T].\label{eq3-HJB}
\end{align}
Then $\bar u(\cdot)$ is an optimal control associated with the trajectory $\bar x(\cdot)$ 
for the problem $OCP(t_0,x_0,L)$.
\end{proposition}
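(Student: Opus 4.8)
The plan is to carry out a classical dynamic-programming verification argument: I will show that the candidate value $\vphi(t_0,x_0)$ is a uniform lower bound for the cost \eqref{eq:cost} over all admissible controls, and that this bound is attained precisely by $\bar u(\cdot)$. The central observation driving the whole argument is that the operator $\mathcal{H}_f$ is designed so that, evaluated along any trajectory, $\mathcal{H}_f(L,\vphi)$ equals the Lagrangian plus the total time-derivative of $\vphi$; this is what lets the positivity and equality hypotheses translate directly into an inequality and an equality on the cost.

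First I would fix an arbitrary admissible control $u(\cdot)$ with associated trajectory $x(\cdot)$ satisfying $x(t_0)=x_0$, and consider the scalar map $t\mapsto\vphi(t,x(t))$. Differentiating along the dynamics \eqref{eq:dyn} gives $\frac{\dd}{\dd t}\vphi(t,x(t)) = \frac{\partial\vphi}{\partial t}(t,x(t)) + \nabla_x\vphi(t,x(t))^\intercal f(x(t),u(t))$, so by the definition of $\mathcal{H}_f$ one has the pointwise identity $L(x(t),u(t)) = \mathcal{H}_f(L,\vphi)(t,x(t),u(t)) - \frac{\dd}{\dd t}\vphi(t,x(t))$. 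Integrating this from $t_0$ to $T$ and applying the fundamental theorem of calculus yields
\[
J(t_0,T,x_0,u(\cdot),L) = \int_{t_0}^T \mathcal{H}_f(L,\vphi)(t,x(t),u(t))\,\dd t + \vphi(t_0,x_0) - \vphi(T,x(T)).
\]

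Next I would use the three hypotheses to control the right-hand side. Since $u(\cdot)$ is admissible, $x(T)\in X_T$, so the terminal condition \eqref{eq2-HJB} forces $\vphi(T,x(T))=0$; and since $(t,x(t),u(t))\in[t_0,T]\times X\times U$ almost everywhere, the positivity constraint \eqref{eq1-HJB} makes the integrand nonnegative. Together these give the lower bound $J(t_0,T,x_0,u(\cdot),L)\ge\vphi(t_0,x_0)$, valid for every admissible control. Repeating the same computation for $\bar u(\cdot)$ and invoking \eqref{eq3-HJB}, which makes the integrand vanish identically, shows $J(t_0,T,x_0,\bar u(\cdot),L)=\vphi(t_0,x_0)$; hence $\bar u(\cdot)$ attains the lower bound and is therefore optimal.

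The main technical point to be careful about is the regularity needed to justify the fundamental theorem of calculus: the control $u(\cdot)$ is merely measurable, so $x(\cdot)$ is only absolutely continuous and the chain rule holds only almost everywhere. I would address this by noting that composing the $C^1$ function $\vphi$ with the absolutely continuous curve $t\mapsto(t,x(t))$ produces an absolutely continuous function whose derivative equals the chain-rule expression a.e., so the integral identity remains valid; correspondingly, it is enough that the constraints \eqref{eq:constraintXU} hold almost everywhere for \eqref{eq1-HJB} to apply under the integral sign. Once this measure-theoretic bookkeeping is settled, the remainder of the proof is a direct substitution.
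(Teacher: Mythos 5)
Your proof is correct and follows essentially the same verification argument as the paper: integrate $\mathcal{H}_f(L,\vphi)$ along an arbitrary admissible trajectory, use the terminal condition \eqref{eq2-HJB} to eliminate $\vphi(T,x(T))$, deduce the lower bound $J\ge\vphi(t_0,x_0)$ from \eqref{eq1-HJB}, and show the bound is attained by $\bar u(\cdot)$ via \eqref{eq3-HJB}. Your added care about absolute continuity and the a.e.\ chain rule is a welcome refinement the paper leaves implicit, but it does not change the route.
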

\begin{proof}
For any admissible control $u(\cdot)$ associated with a trajectory $x(\cdot)$, integrating \eqref{eq1-HJB} 
yields
$\int_{t_0}^T L(x(s),u(s))\dd s + \int_{t_0}^T \dd \vphi\,(s,x(s))\ge 0$. Using \eqref{eq2-HJB},
we obtain $ \vphi(t_0,x_0)\le J(t_0,T,x_0,u(\cdot),L)+\vphi(T,x(T))=J(t_0,T,x_0,u(\cdot),L)$. The same applied 
to $\bar{x}(\cdot)$ and $\bar{u}(\cdot)$ yields 
$\vphi(t_0,x_0)=J(t_0,T,x_0,\bar u(\cdot),L)$ and therefore $\bar u(\cdot)$ is optimal for $OCP(t_0,x_0,L)$. 
\end{proof}

\subsection{$\veps$-optimal solution for the inverse problem}

Next, we describe  slightly weaker conditions than those of Proposition \ref{prop:HJB-sufficient}. They are the basis of our numerical method.

\begin{proposition} \label{prop-relaxHJB}
Let $\bar u(\cdot)$ be an admissible control associated with a trajectory $\bar x(\cdot)$ starting from
$x_0$ at $t=t_0$.
Suppose that there exist a real $\veps$ and a continuous function $L:X\times U\rightarrow \mathbb{R}$ 
and continuous differentiable function $\vphi:[0,T]\times X\to\mathbb{R}$ such that 
\begin{align}
&\mathcal{H}_f(L,\vphi)(t,x,u)\ge 0,\; \forall (t,x,u)\in [t_0,T]\times X\times U, \label{cond1:hjb-relax}\\
&-\veps \le \vphi(T,x) \le 0,\; \forall x\in X_T,\label{cond2:hjb-relax}\\
&\int_{t_0}^T \mathcal{H}_f(L,\vphi)(s,\bar x(s),\bar u(s))\dd s \le \veps.\label{cond3:hjb-relax}
\end{align}
Then $\bar u(\cdot)$ is $2\veps$-optimal for $OCP(t_0,x_0,L)$. 
\end{proposition}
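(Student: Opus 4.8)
The plan is to mirror the argument of Proposition \ref{prop:HJB-sufficient}, the only novelty being that the two equalities there are now replaced by two-sided slacks, each of which contributes one $\veps$ to the final bound. The backbone is the elementary observation that along \emph{any} admissible trajectory the operator $\mathcal{H}_f$ telescopes. Precisely, for an admissible control $u(\cdot)$ with associated trajectory $x(\cdot)$, the chain rule together with the dynamics \eqref{eq:dyn} gives $\tfrac{\dd}{\dd s}\vphi(s,x(s)) = \tfrac{\partial\vphi}{\partial t}(s,x(s)) + \nabla_x\vphi(s,x(s))^\intercal f(x(s),u(s))$, so that $\mathcal{H}_f(L,\vphi)(s,x(s),u(s)) = L(x(s),u(s)) + \tfrac{\dd}{\dd s}\vphi(s,x(s))$. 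This identity is what lets every integral below collapse into the running cost plus boundary values of $\vphi$; its validity rests on $\vphi$ being $C^1$ and $x(\cdot)$ being absolutely continuous, which holds under admissibility.

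First I would establish a lower bound on the cost valid for all admissible controls. Integrating \eqref{cond1:hjb-relax} over $[t_0,T]$ and using the telescoping identity yields $0 \le J(t_0,T,x_0,u(\cdot),L) + \vphi(T,x(T)) - \vphi(t_0,x_0)$. Since $u(\cdot)$ is admissible we have $x(T)\in X_T$, so the \emph{upper} half of \eqref{cond2:hjb-relax}, namely $\vphi(T,x(T))\le 0$, gives $J(t_0,T,x_0,u(\cdot),L) \ge \vphi(t_0,x_0)$. Taking the infimum over $\mathcal{U}$ shows $\vphi(t_0,x_0)$ is a lower bound for the optimal value $J^\star_L(t_0,x_0)$.

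Next I would produce an upper bound for the candidate $\bar u(\cdot)$. Applying the same telescoping identity to $(\bar x(\cdot),\bar u(\cdot))$ turns \eqref{cond3:hjb-relax} into $J(t_0,T,x_0,\bar u(\cdot),L) + \vphi(T,\bar x(T)) - \vphi(t_0,x_0) \le \veps$. Because $\bar x(T)\in X_T$, the \emph{lower} half of \eqref{cond2:hjb-relax}, namely $\vphi(T,\bar x(T))\ge -\veps$, gives $-\vphi(T,\bar x(T))\le \veps$, and hence $J(t_0,T,x_0,\bar u(\cdot),L) \le \vphi(t_0,x_0) + 2\veps$. Chaining this with the lower bound $\vphi(t_0,x_0)\le J^\star_L(t_0,x_0)$ from the previous step yields $J(t_0,T,x_0,\bar u(\cdot),L) \le J^\star_L(t_0,x_0) + 2\veps$, i.e. $\bar u(\cdot)$ is $2\veps$-optimal.

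The calculations are routine; the one point requiring care is the bookkeeping of the two slacks. The delicate observation is that the two-sided constraint \eqref{cond2:hjb-relax} plays two distinct roles: its side $\vphi(T,\cdot)\le 0$ is used to lower-bound the cost of competing controls (contributing nothing, i.e. a clean inequality $J\ge\vphi(t_0,x_0)$), whereas its side $\vphi(T,\cdot)\ge -\veps$ is used to upper-bound the cost of $\bar u(\cdot)$ and contributes one $\veps$; the relaxed along-trajectory condition \eqref{cond3:hjb-relax} contributes the second $\veps$. Keeping the signs straight so that these combine additively to exactly $2\veps$, rather than cancelling or doubling incorrectly, is the only place where an error could creep in.
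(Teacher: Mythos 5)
Your proof is correct and follows essentially the same route as the paper's: the paper likewise reuses the telescoping argument of Proposition \ref{prop:HJB-sufficient} to get $\vphi(t_0,x_0)\le J(t_0,T,x_0,u(\cdot),L)$ for all admissible $u(\cdot)$ via the upper half of \eqref{cond2:hjb-relax}, then combines \eqref{cond3:hjb-relax} with the lower half $\vphi(T,\bar x(T))\ge-\veps$ to obtain $J(t_0,T,x_0,\bar u(\cdot),L)\le\vphi(t_0,x_0)+2\veps$. Your write-up merely makes explicit the chain-rule identity and the role of each slack, which the paper leaves implicit.
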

\begin{proof}
Following the proof of Proposition \ref{prop:HJB-sufficient}, we get
$\vphi(t_0,x_0)\le  J(t_0,T,x_0,u(\cdot),L),\,\forall u(\cdot)\in \mathcal{U}$ and from  
condition \eqref{cond3:hjb-relax}, we get 
$\vphi(t_0,x_0)\ge  J(t_0,T,x_0,\bar u(\cdot),L) +\vphi(T,\bar x(T))-\veps$ and therefore
$J(t_0,T,x_0,\bar u(\cdot),L)\le \vphi(t_0,x_0)+2\veps$.
\end{proof}

 In general the value function is nonsmooth, but in case where it is
at least continuous, by the Stone-Weierstrass theorem,  it can be approximated 
by polynomials, uniformly on compact sets, and this approximation
 will provide lower and upper bounds for the value function.
Note that in this case, 
 we can approximate the optimal value as closely as desired.

\subsection{Non well-posedness of the inverse problem}
\label{sec:non-wellposedness}

Proposition \ref{prop-relaxHJB}  provides us with a simple and powerful tool to certify global-suboptimality
of a given set of trajectories, e.g. those given in the database $\mathcal{D}$.
However as we have already mentioned, there are many couples $(\vphi,L)$
that can satisfy (\ref{cond1:hjb-relax})-(\ref{cond3:hjb-relax}), and which make the (IOCP) ill-posed in general.  For instance:

\begin{itemize}
\item {\it Trivial solution.} The zero Lagrangian and the zero function satisfy 
 conditions \eqref{cond1:hjb-relax}-\eqref{cond2:hjb-relax}-\eqref{cond3:hjb-relax} with
 $\veps=0$, independently of the input trajectories. 
\item {\it Conserved quantity.} Let $u(\cdot)$ be an optimal control associated with a trajectory $x(\cdot)$. Suppose that the system dynamics is such that all feasible trajectories satisfy
 an equation of the form $g(x(t),u(t))=0,\, \forall t\in [0,T]$ for some continuous function $g$.
 Then $L=g^2$ is a candidate for the inverse problem associated with the zero value function $\vphi$. \\ 
Such class of optimal control problems is well spread in applications, especially nonholonomic
systems \cite{Jean2014,Jean2017}. The latter falls into sub-Riemannian geometry framework and
have many applications namely in robotics \cite{Jean2013,Laumond1998,gauthier2013}.
 They can be formulated as  
\begin{equation}
\begin{aligned}
\label{pb:SR-ocp}
&L(x) = (\dot x \cdot \dot x)^{1/2},\quad (T \text{ is fixed})\\
&f(x,u) = \sum_{i=1}^m u_iF_i(x),\; x\in X\subset \mathbb{R}^n,\, u\in U\subset \mathbb{R}^m\\
&X_T = \{x_T\},\, x_T\in \mathbb{R}^n
\end{aligned}
\end{equation}
where $(F_1,\ldots\,F_m)$ are $m$ (smooth) orthonormal vector fields.
The Lagrangian
$L(x)$ stands for the length of the admissible curve $x(\cdot)$ .\\
It is well known \cite{Jean2014} that this is a  length minimization problem equivalent to the 
energy minimization problem where the Lagrangian is $L(x,u) = \dot x \cdot \dot x$ 
and also equivalent to the time minimization problem with $L(x,u)=1$ and where the control domain is restricted to: $\lVert u \rVert_2^2=1$.
\item {\it Total variations.} Consider a continuous function $\psi : [0,T]\times X\rightarrow \mathbb{R}$ such that $\psi$ vanishes on $X_T$. Then $(\vphi,L)$ with $\vphi=-\psi$ and $L=-\nabla_x\psi\cdot f$ 
is solution to the (IOCP) regardless of the database $\mathcal{D}$!
\item {\it Conic convexity.} Let $(L,\vphi)$ be a solution of the inverse problem $(IOCP)$ and
let $\psi:[0,T] \times X\rightarrow \mathbb{R}$ be such that
$\psi(T,x)=0, \, \forall x\in X_T$. If  $\tilde \vphi = \vphi+\psi$ then the pair $(L + (\nabla_x \psi)\cdot f),\tilde \phi)$  satisfies the same conditions 
\eqref{cond1:hjb-relax}-\eqref{cond2:hjb-relax}-\eqref{cond3:hjb-relax} with the same $\veps$  and is solution of the same inverse problem.
 \end{itemize}
To illustrate this last issue, we consider the following problem.
Let $\vphi_L$ be a solution of HJB equations associated with the following optimal control problem
\begin{equation*} \label{gen-ocp}
\begin{aligned}
&\underset{T>0, \lVert u\rVert_2\le 1}{\min }\; \; \int_0^T L(x(t))\, \dd t, \quad \text{ (T is free) }\\
(P)_L\qquad \qquad   & \dot x = u,  \\
& x(0) \in X=B_2,\quad x(T) \in  X_T=\partial X
\end{aligned}
\end{equation*}
where $B_2$ denotes the unit ball of $\mathbb{R}^2$.

\begin{lemma}
The optimal control law of $(P)_L$ is given by $u=-\nabla \vphi_L / \lVert \nabla \vphi_L\rVert_2$
and the Lagrangian $L$ satisfies 
$L = \lVert \nabla \vphi_L \rVert_2$.
\end{lemma}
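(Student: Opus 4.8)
The plan is to recognize $(P)_L$ as a free-terminal-time problem with autonomous data, to write down the associated stationary Hamilton--Jacobi--Bellman equation in terms of the operator $\mathcal{H}_f$ of the Definition, and then to reduce the optimality condition to a pointwise minimization of a linear function over the unit ball. First I would observe that since $L$ depends only on $x$ and the final time is free, the value function $\vphi_L$ may be taken time-independent, so that $\partial\vphi_L/\partial t=0$ and, with $f(x,u)=u$, the operator specializes to $\mathcal{H}_f(L,\vphi_L)(x,u)=L(x)+\nabla_x\vphi_L(x)^\intercal u$. The HJB characterization of optimality---the autonomous counterpart of \eqref{eq1-HJB} and \eqref{eq3-HJB} read with the boundary condition $\vphi_L=0$ on $X_T=\partial X$---requires $\mathcal{H}_f(L,\vphi_L)(x,u)\ge 0$ for every admissible pair together with attainment of zero along the optimal arc, i.e.
\[
\inf_{\lVert u\rVert_2\le 1}\left[\,L(x)+\nabla_x\vphi_L(x)^\intercal u\,\right]=0,\qquad x\in X.
\]

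Next I would carry out the inner minimization explicitly. For fixed $x$ the map $u\mapsto \nabla_x\vphi_L(x)^\intercal u$ is linear, so by Cauchy--Schwarz one has $\nabla_x\vphi_L^\intercal u\ge -\lVert\nabla_x\vphi_L\rVert_2\,\lVert u\rVert_2\ge -\lVert\nabla_x\vphi_L\rVert_2$ on the unit ball, with equality precisely at $u^\star=-\nabla_x\vphi_L/\lVert\nabla_x\vphi_L\rVert_2$ whenever $\nabla_x\vphi_L\neq 0$. Substituting this minimizer into the HJB equation above yields $L(x)-\lVert\nabla_x\vphi_L(x)\rVert_2=0$, which is simultaneously the claimed Lagrangian identity $L=\lVert\nabla\vphi_L\rVert_2$ and, through the minimizing argument, the feedback law $u=-\nabla\vphi_L/\lVert\nabla\vphi_L\rVert_2$.

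The main obstacle is the justification that $\vphi_L$---a priori only a solution of the HJB equations, and in general nonsmooth---indeed realizes the infimum and vanishes along the optimal trajectory; rigorously this rests on the verification argument of Proposition \ref{prop:HJB-sufficient} read with $\veps=0$, together with the continuity/approximation remark following Proposition \ref{prop-relaxHJB}. A secondary technical point is the singular set where $\nabla\vphi_L=0$, on which the proposed feedback is undefined: there the minimum of the linear term is $0$, the HJB equation forces $L=0$, and this is still consistent with the stated identity $L=\lVert\nabla\vphi_L\rVert_2$, so the conclusion persists off this set along optimal arcs.
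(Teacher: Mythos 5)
Your proof is correct, and it matches the intended argument: the paper states this lemma without proof, treating it as an immediate consequence of the stationary HJB equation for the free-terminal-time problem, which is exactly what you spell out. Your pointwise minimization of $u\mapsto L(x)+\nabla_x\vphi_L(x)^\intercal u$ over the unit ball via Cauchy--Schwarz, yielding $u^\star=-\nabla\vphi_L/\lVert\nabla\vphi_L\rVert_2$ and $L=\lVert\nabla\vphi_L\rVert_2$ (and your consistency check on the set where $\nabla\vphi_L=0$), is the standard and evidently intended justification, and it agrees with the paper's subsequent Proposition on $(P)_{L_p}$, where $-\nabla J^\star_{L_p}/\lVert\nabla J^\star_{L_p}\rVert_2=x/\lVert x\rVert_2$ and $\lVert\nabla J^\star_{L_p}\rVert_2=\lVert x\rVert_2^p=L_p$.
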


\begin{proposition}  \label{equiv-pb}
Let $p\in \mathbb{N}$ and define $L_p(x) = \lVert x \rVert_2^p$. The value function associated 
with $(P)_{L_p}$ is $J_{L_p}^\star(x) = 1/(p+1)\,(1-\lVert x \rVert_2^{p+1})$. Besides,
for all $p\in\mathbb{N}$, problems $(P)_{L_p}$ have
the same optimal control law given by $u^\star(x) = x/\lVert x\rVert_2$.
\end{proposition}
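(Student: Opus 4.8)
The plan is to solve $(P)_L$ for $L=L_p$ by a verification argument based on the stationary Hamilton-Jacobi-Bellman equation, in the spirit of Proposition \ref{prop:HJB-sufficient} but without any explicit time-dependence since $T$ is free. Because $f(x,u)=u$, the HJB equation for $(P)_L$ reads $\min_{\lVert u\rVert_2\le 1}\{L(x)+\nabla\vphi_L\cdot u\}=0$ on $X=B_2$, with terminal condition $\vphi_L=0$ on $X_T=\partial B_2$. The minimizing control is $u=-\nabla\vphi_L/\lVert\nabla\vphi_L\rVert_2$ and the equation collapses to $L=\lVert\nabla\vphi_L\rVert_2$, which is exactly the content of the preceding Lemma. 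My whole task is therefore to exhibit a $C^1$ solution $\vphi_{L_p}$ of this first-order PDE with the prescribed boundary data.

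First I would simply guess the radially symmetric candidate $\vphi(x)=\tfrac{1}{p+1}(1-\lVert x\rVert_2^{p+1})$ and check it by direct differentiation. Writing $r=\lVert x\rVert_2$ and using $\nabla_x r=x/r$, one gets $\nabla\vphi=-r^{p-1}x$, hence $\lVert\nabla\vphi\rVert_2=r^{p}=L_p(x)$, so the HJB equation $L_p=\lVert\nabla\vphi\rVert_2$ holds on $B_2\setminus\{0\}$; moreover $\vphi=0$ when $r=1$, i.e.\ on $\partial B_2$, so the boundary condition holds as well. The induced feedback is $u^\star=-\nabla\vphi/\lVert\nabla\vphi\rVert_2=x/\lVert x\rVert_2$, independent of $p$, which already explains why all the problems $(P)_{L_p}$ share the same control law: the gradient of the value function always points in the $-x$ direction, and normalization erases the $p$-dependent magnitude.

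Next I would confirm that this candidate is actually attained, so that it is the value function and not merely a subsolution. Plugging $u^\star$ into the dynamics gives $\dot r=x\cdot\dot x/\lVert x\rVert_2=1$, so $r(t)=r_0+t$ and the trajectory reaches $\partial B_2$ at the finite free time $T=1-r_0$, remaining inside $B_2$ throughout (so the run is feasible). Integrating the cost, $\int_0^{T}r(t)^p\,\dd t=\int_{r_0}^{1}s^p\,\dd s=\tfrac{1}{p+1}(1-r_0^{p+1})=\vphi(x_0)$, matching the candidate. Combined with the verification inequality underlying Proposition \ref{prop:HJB-sufficient} (the HJB inequality $L_p+\nabla\vphi\cdot f\ge 0$ integrates to $\vphi(x_0)\le J$ for every admissible control, and equality is reached along $u^\star$), this proves $\vphi=J^\star_{L_p}$ and that $u^\star$ is optimal.

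The only genuine obstacle is regularity at the origin, where the field $x\mapsto r^{p-1}x$ degenerates: the candidate is $C^1$ on all of $B_2$ only for $p\ge 1$ (for $p=0$ the value function $1-\lVert x\rVert_2$ has a kink at the origin), so the clean verification argument applies directly away from $x=0$ and must be supplemented by a limiting or viscosity-solution argument near the origin. In practice this is harmless, since the optimal trajectories move radially outward and never revisit a neighborhood of the origin once started; hence the verification inequality can be integrated along feasible arcs without ever differentiating $\vphi$ at $0$.
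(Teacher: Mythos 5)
Your proof is correct and follows essentially the route the paper intends: the paper states this proposition without an explicit proof, leaning on the preceding Lemma (the eikonal-type relation $L=\lVert\nabla\vphi_L\rVert_2$ with feedback $u=-\nabla\vphi_L/\lVert\nabla\vphi_L\rVert_2$), and your verification --- computing $\nabla\vphi=-\lVert x\rVert_2^{p-1}x$ for the candidate $\vphi(x)=\tfrac{1}{p+1}\bigl(1-\lVert x\rVert_2^{p+1}\bigr)$, checking attainment via $\dot r=1$ along the radial flow, and observing that normalization makes the feedback $x/\lVert x\rVert_2$ independent of $p$ --- is precisely the computation the paper leaves implicit. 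Your closing caveat about the loss of $C^1$ regularity at the origin when $p=0$ flags a point the paper silently passes over, and your proposed remedy is adequate (one clean fix: for any admissible arc, $r(t)=\lVert x(t)\rVert_2$ is Lipschitz with $\lvert\dot r\rvert\le 1$ a.e., so $\tfrac{\dd}{\dd t}\vphi(x(t))=-r^p\dot r\ge -L_p(x(t))$ a.e.\ even through the origin, which covers arbitrary competitors and not only the outward-moving optimal trajectories your sketch refers to).
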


For $p\in \mathbb{N}$, let $g_p:X \rightarrow \mathbb{R}$ be defined by 
$g_p(x)=\lVert x\rVert_2-1/(p+1)\,(\lVert x\rVert_2^{p+1}+p)$.
Then, $L_p(x) = L_0(x) - \langle \nabla g_p(x), u^\star(x)  \rangle$ and
$J_{L_p}^\star(x) = J_{L_0}^\star(x) + g_p(x)$.

\noindent
\textbf{Normalization.}\\ 
Due to the convexity of the conditions \eqref{cond1:hjb-relax}-\eqref{cond2:hjb-relax}-\eqref{cond3:hjb-relax}, the set of solutions of the inverse problem is a convex cone. 
Hence, to overcome the non uniqueness of solutions of the inverse problem and to avoid
solutions which do not have a physical interpretation, a natural idea is to impose a normalizing constraint (see e.g. \cite{gauthier2013,laumont2010} or more recently \cite{Pauwels2016}). For instance for free terminal time problems, in \cite{Pauwels2016}  one imposes the constraint on $(\vphi,L)$:
\[
\left\vert \,1-\mathcal{A}(L,\vphi)\,\right\vert\,=\,\veps,
\] 
 where $\mathcal{A}$ is the linear operator:
\[  \mathcal{A}(L,\vphi) = \int_{\tilde X \times U}\! \Big[L+\frac{\partial \vphi}{ \partial t} 
 +\nabla_x \vphi \cdot f\Big]\dd x\dd u.\]
This allows to avoid the trivial solution $(\vphi,L)=(0,0)$. 

 In this paper, we will not use such a normalization. Rather 
 we propose (a) to restrict the search of Lagrangian solutions $L$
 to a class of functions $\mathcal{L}$ introduced in the next section, and (b) 
include a normalization constraint on $L$ which is more appropriate for our numerical scheme.

\section{Numerical implementation}
\label{sec:sos-approach}

\subsection{Polynomial and semi-algebraic context}

Without any further assumptions on the data $X,X_T,U$ and $f$, 
the ``positivity constraints"  \eqref{cond1:hjb-relax}-\eqref{cond2:hjb-relax}-\eqref{cond3:hjb-relax} 
cannot be implemented efficiently. However if
$X,X_T,U$ are compact basic semi-algebraic sets and $f$ is a polynomial, then
we may and will use powerful positivity certificate from real algebraic 
geometry, e.g. Putinar's Positivstellensatz \cite{Putinar1993}. In doing so, the positivity constraints
 \eqref{cond1:hjb-relax}-\eqref{cond2:hjb-relax}-\eqref{cond3:hjb-relax} translate into linear matrix inequalities (LMI). The maximum size of the latter depends 
 on the degree allowed for the {\it sums-of-squares} (SOS)
 polynomials involved in the positivity certificate. Of course the higher is the degree, the more powerful is the certificate.

In the sequel  the class of Lagrangian $\mathcal{L}$, denoted by $\mathcal{L}_{a,b}$,
is a set of polynomials.  For a multi-index $\alpha=(\alpha_1,\ldots,\alpha_p)\in \mathbb{N}^p, \, |\alpha|=\sum_{i=1}^p\alpha_i$ and for 
$y=(y_1,\ldots,y_p)$, the notation $y^\alpha$ stands for the monomial $y_1^{\alpha_1}\ldots y_p^{\alpha_p}$. 
For two vectors $x=(x_1,\ldots,x_n)\in\mathbb{R}^n$, $u=(u_1,\ldots,u_m)\in\mathbb{R}^m$,
and two integers $a,b$,
let $\mathcal{L}_{a,b}\subset \mathbb{R}[x,u]$ be defined by:
\begin{equation}\label{eq:classPab}
\begin{aligned} 
\mathcal{L}&_{a,b}=\Bigg \{L\in \mathbb{R}[x,u] \mid  L(x,u) = m_a(x)^\intercal\, C_a^x\, m_a(x)  \\
&+ m_b(u)^\intercal\, C_{b}^u\, m_b(u),\; C_a^x\in \mathbb{S}^j_+,\,j=\binom{n+a}{a},\\
&C_b^u\in \mathbb{S}^k_+,\,k=\binom{m+b}{b} 
\Bigg\},
\end{aligned}
\end{equation}
where $m_d(z)$ denotes the vector of monomials $(z^\alpha)_{|\alpha| \le d}$
which forms a canonical basis of $\mathbb{R}_d[z]$ and $\mathbb{S}_+^d$ is the set
of real symmetric and positive semidefinite $d\times d$ matrices.
\begin{remark}
A polynomial of $\mathcal{L}_{a,b}$ is the sum of a polynomial 
 $L_x \in \mathbb{R}_{2a}[x]$ and a polynomial $L_u\in \mathbb{R}_{2b}[u]$ such 
 that $L_x$ and $L_u$ are SOS. Conversely, every SOS polynomial $q$
 of $\mathbb{R}_{2d}[x]$ can be written as $q = m_d(x)^\intercal\,  Q\, m_d(x),\, 
 Q\in \mathbb{S}_+$. 
\end{remark}

To avoid the trivial Lagrangian $L=0$, we include the normalizing constraint 
 \begin{equation}  \label{eq:constraint-jr}
  \optr(C_a^x) +  \optr(C_b^u) = C,\; C \text{ is a constant}. 
 \end{equation}
where  $\optr(A)$ denotes the trace of a matrix $A$, and $C_a^x,C_b^u$ are the matrices which 
 define $L$ in its definition \eqref{eq:classPab}.
Note that the map $P\in S_+ \mapsto \optr(P) \in \mathbb{R}_+$ is linear and define a norm.

\subsection{Inverse problem formulation}
 \label{sec:iocp-implementation}
For practical computation, the input data consists of 
$N$ trajectories $\mathcal{D} = \{(t_i,x_i(\cdot),u_i(\cdot))\}_{i=1,\ldots, N}$ 
$(t_i,x_i(\cdot),u_i(\cdot))$ starting at time $t_i$ in state $x_i(t_i)$ and admissible for
(\ref{eq:dyn})-(\ref{eq:constraintXU}), for all $i=1,\ldots,N$. In practice, each trajectory
$(t_i,x_i(\cdot),u_i(\cdot))$ is sampled at some points $(kT/s)$, $k=0,1,\ldots s$, of the interval $[0,T]$,
for some fixed integer $s\in\mathbb{N}$. Then with $t_i:=k_iT/s$ for some integer $k_i<s$,
the database $\mathcal{D}$ is the collection
$((k_iT/s,x_i(kT/s),u_i(kT/s))$, $k=k_i,\ldots,s$, $i=1,\ldots,N$, 

Then condition \eqref{cond3:hjb-relax}
is replaced by :
\[ \displaystyle \sum_{k=k_i}^s \mathcal{H}_f(L,\vphi)(k_i,x_i(kT/s),u_i(kT/s)) \le \veps,\]
for all $i=1,\ldots,N$. Alternatively one my also consider
\[ \sum_{i=1}^N\sum_{k=k_i}^s \mathcal{H}_f(L,\vphi)(k_i,x_i(kT/s),u_i(kT/s)) \le \veps.\]
Then consider the following hierarchy of optimization problems indexed by $p\in\mathbb{N}$:
\begin{equation} \label{pb:IOCP}
\begin{array}{l}
\veps^\star_p=\displaystyle \inf_{L,\vphi,\veps}\! \; \veps \\
\text{s.t. } \; \mathcal{H}_f(L,\vphi)(t,x,u) \ge 0, \, \forall (t,x,u)\in [0,T]\!\times \!X\times \!U,\\
  \qquad \vphi(T,x) \le 0,\, \forall x\in X_T,\\
  \qquad \vphi(T,x_k)\ge -\veps,\,  k=k_i,\ldots,s,\\
 \qquad \displaystyle\sum_{i=1}^N\sum_{k=k_i}^s \mathcal{H}_f(L,\vphi)(k_i,x_i(kT/s),u_i(kT/s)) \le \veps,\\
  \qquad  L \in \mathcal{L}_{a,b}\, \text{ with }\,  \optr(C_a^x) +  \optr(C_b^u) = 1,  \tag{$IOCP_{a,b}$} 
\end{array}
\end{equation}
where  $\vphi \in \mathbb{R}_{2p}[t,x]$ and  $\veps \in \mathbb{R}$.

\begin{remark}
 \eqref{pb:IOCP} is written for control problems with fixed terminal time $T$. For free terminal time problems
 the function $\vphi$ doesn't depend on $t$. 
\end{remark} 
In \eqref{pb:IOCP}, each positivity constraint on $(L,\vphi)$ 
is replaced by a Putinar's positivity 
certificate \cite{Putinar1993}. The latter asserts that if a polynomial $p$ is positive on 
a compact basic semi-algebraic set
$K=\{x\mid g_i(x)\ge 0,\, g_i\in \mathbb{R}[x],\,i=1,\ldots,m \}$
then $p$ can be written as
\begin{equation} \label{eq:putinar}
p(x)=  \sum_{i=0}^m g_i(x)\sigma_i(x),\quad
\forall x\in \mathbb{R}^n,
\end{equation}
for some SOS polynomials $\sigma_i$, $i=0,\ldots,m$ (and where $g_0(x):=1$ for all $x$).
In addition we impose the degree bound ${\rm deg}(\sigma_i\,g_i)\leq 2p$, $i=0,\ldots,m$.

In doing so ($IOCP_{a,b}$) becomes a hierarchy of {\it semidefinite programs} (SDP) indexed
by $p\in\mathbb{N}$ \cite{Lasserre2010,Lasserre2001}. The size of each SDP in the hierarchy
depends on $p$,  i.e., on the strength of Putinar's positivity certificate (\ref{eq:putinar}) used in (\ref{pb:IOCP}). In practice this size is limited by the capabilities of semidefinite solvers.

Our implementation is based on the \yalmip toolbox \cite{yalmip} which provides a SOS interface to 
handle polynomial constraints and then solves the resulting SDP problem by running the \mosek solver \cite{mosek}. We can handle problems up to $6,7$ variables $(t,x,u)$ 
and degree $12$ for $\vphi$. For larger size problems, heuristics and/or techniques exploiting the structure of the problem should be used. Also, using the recent alternative positivity certificates (to (\ref{eq:putinar})) proposed in
\cite{Lasserre2017} and \cite{ahmadi2014} should be helpful.

\section{Illustrative examples}
We have tested our numerical method described in Section \ref{sec:sos-approach} on four examples
also analyzed in \cite{Pauwels2016}.

\subsection{Settings}

The input of our numerical scheme consists of:
\begin{itemize}
\item final time $T$ if the problem has fixed finite horizon,
\item state constraint set $X$ and the control set $U$,
\item state (and final state) constraints $X,X_T$,
\item vector field $f$,
\item class of polynomials $\mathcal{L}_{a,b}$ \eqref{eq:classPab} to which we restrict the search
for the Lagrangian $L$.
\end{itemize}
Since for problems considered here, at each point $(t,x)\in [0,T]\times X$ we know the associated
optimal control $u(t)$, the database $\mathcal{D}$ now consists of the collection
$((t_k,x_k,u(t_k))_{k=1,\ldots,N}\in [0,T]\times X\times U$ 
where for each $k$ the couple $(t_k,x_k)$ is randomly generated in some set  $S_{\mathcal{D}}\subset[0,T]\times X$.

The output of (IOCP) is a triple $(L,\vphi,\veps^\star)$ where $L$ is a polynomial Lagrangian, $\vphi$ is a 
polynomial value function and $\veps^\star$ is 
a parameter which quantify the sub-optimality
 of the solutions associated with $L$. 
 More precisely, according to Proposition \eqref{prop-relaxHJB}, 
  if $J_{L}^\star(t_0,x_0)$ is  the value
 function  of $OCP(t_0,x_0,L)$ then  
  \begin{align*} \label{eq:csq-LeqL0}
  &\vphi(t_k,x_k)\ge J_{L}^\star(t_k,x_k)\! -\! 2\,\veps^\star,\,k=1,\ldots,N,\\
  & \vphi(T,x_k)\ge -\veps^\star,\,k=1,\ldots,N,\\
    &\vphi(t,x)\le J_{L}^\star(t,x), \quad  \vphi(T,x)\le 0, \, \forall (t,x)\in [0,T]\times X.
 \end{align*}

\subsection{Benchmark direct problems}
We present the optimal control problems that we considered
for our numerical computation. In all cases we know the value function $J_ {\bar L}^\star$ 
associated with the Lagrangian $\bar L$.

\subsubsection{Linear Quadratic case}
\label{pb:LQR}
\begin{align*}
&T=1,\; X=B_2,\; X_T=\mathbb{R}^2,\; U=\mathbb{R}^2,
f=(x_2,u)^\intercal.
\end{align*}
where $B_2$ denotes the unit ball in $\mathbb{R}^2$. This is the celebrated LQ-problem
and here the Lagrangian to recover is 
$\bar L = 2x_1^2+1/2\,x_1x_2+x_2^2+u^2$. \\
The optimal control law is given by the closed-loop 
control $u(t) = I_2\,B\,E(t)x(t)$ where, for this problem $B=(1\quad 0)$,
 $E(.)$ is solution of the corresponding Riccati equation ($E(T)=0$)
and the value function is given by $J_{\bar L}^\star(t,x(t)) = -x(t)^\intercal E(t)x(t)$.

\subsubsection{Minimum exit-norm in dimension \texorpdfstring{$2$}{ex1}}
\label{pb:exitnorm}
\begin{align*}
T \mbox{ is free},\; X=U=B_2,\; X_T=\partial X,\; f=u.
\end{align*}
The Lagrangian to recover is $\bar L=\lVert x \rVert_2^2+\lVert u \rVert_2^2$.
The associated direct OCP is called the minimum exit-norm problem because at $T^\star$, $x(T^\star)\in\partial X$. 
The optimal control law is $u=x$ and the value function is the 
polynomial $J_{\bar L}^\star(x)=1-\lVert x \rVert_2^2$.

\subsubsection{Minimum exit-time in dimension \texorpdfstring{$2$}{ex2}}
\label{pb:exittime}
\begin{align*}
T \mbox{ is free},\; X=U=B_2,\; X_T=\partial X,\; f=u.
\end{align*}
The Lagrangian to recover is $\bar L=1$.
The optimal control law is $u=x/\lVert x \rVert_2$ and the value function is $J_{\bar L}^\star(x)=1-\lVert x \rVert_2$. It corresponds to the problem $(P)_{L_0}$ introduced in Section \ref{sec:non-wellposedness}.

\subsubsection{Minimum time Brockett integrator}
\label{pb:heisenberg}
\begin{align*}
&T \mbox{ is free},\; X=3B_2,\, U=B_2,\; X_T=\{0\},\\
 &f=(u_1,u_2,x_2u_1-x_1u_2)^\intercal.
\end{align*}
The Lagrangian to recover is $\bar L=1$.
The optimal control law and the value function are described in \cite{trelat2005}.

\subsection{Numerical results}

According to our optimization inverse problem \eqref{pb:IOCP},
we consider several
classes of polynomials $\mathcal{L}_{a,b}$ in which we seek a Lagrangian $L$ solution
of (IOCP). We vary also the degree of the test function $\vphi$.
Results of the problems \ref{pb:LQR}, \ref{pb:exitnorm}, \ref{pb:exittime}, \ref{pb:heisenberg} are
respectively given in Table \ref{table:LQR}, Table \ref{table:exitnorm}, Table \ref{table:exittime} 
and Table \ref{table:heisenberg}.\\
We generate $N=500$ data points from a set $\mathcal{D}\subset X$ and our algorithm implements the normalization constraint \eqref{eq:constraint-jr}.

\begin{table}[!ht]
\setlength\tabcolsep{3pt}
\def\arraystretch{1.2}
\begin{tabular}{cc|ll} 
 $\deg \vphi$ & class of $L$ & $\veps^\star$ & L \\
 4 &  $\mathcal{L}_{1,1}$ & $7e-2$ & $0.78x_1^2+0.82x_1x_2+2.11x_2^2+1.12u^2$ \\
  10 &  $\mathcal{L}_{1,0}$ & $3.1e-1$ & $2.67x_1^2-2.31x_1x_2+1.33x_2^2$ \\
 10 &  $\mathcal{L}_{1,1}$ & $4.5e-6$ & $2x_1^2+0.5x_1x_2+x_2^2+u^2$ \\
  10 &  $\mathcal{L}_{2,2}$ & $4.5e-6$ & $2x_1^2+0.5x_1x_2+x_2^2+u^2$ \\
\end{tabular}
\caption{Solution $(L,\veps^\star)$ of the problem 
$(IOCP)$ associated with the problem \ref{pb:LQR}.\label{table:LQR} }
\end{table}

\begin{table}[!ht]
\setlength\tabcolsep{3.5pt}
\def\arraystretch{1.2}
\begin{tabular}{cc|ll} 
$\deg \vphi$ & class of $L$ & $\veps^\star$ & L \\
 2 & $\mathcal{L}_{1,1}$ & $0$ & $x_1^2+x_2^2+u_1^2+u_2^2$ \\
  2 & $\mathcal{L}_{2,2}$ & $0$ & $x_1^2+x_2^2+u_1^2+u_2^2$ \\
    4 & $\mathcal{L}_{1,1}$ & $0$ & $x_1^2+x_2^2+u_1^2+u_2^2$ \\
  4 & $\mathcal{L}_{2,2}$ & $0$ & $x_1^2+x_2^2+u_1^2+u_2^2$ \\
    2 & $\mathcal{L}_{0,1}$ & $2e-3$ & $1.97+0.54(u_1^2+u_2^2)$ \\
\end{tabular}
\caption{Solution $(L,\veps^\star)$ of the problem 
$(IOCP)$ associated with the problem \ref{pb:exitnorm}.\label{table:exitnorm}}
\end{table}

\begin{table}[!ht]
\def\arraystretch{1.2}
\setlength\tabcolsep{2.2pt}
\begin{tabular}{ccc|ll} 
$S_\mathcal{D}$ & $\deg \vphi$ & class of $L$ & $\veps^\star$ & L \\
$B_2$ & 4 & $\mathcal{L}_{0,1}$ & $1e-1$ & $0.31+0.34u_1^2+0.36u_2^2$ \\
$B_2$ &  12 & $\mathcal{L}_{0,1}$ & $2e-2$ & $0.327+0.335u_1^2+0.337u_2^2$ \\
$B_2\! \setminus \! 1/2B_2$ & 12 & $\mathcal{L}_{0,1}$ & $2e-4$ & $0.338+0.326u_1^2+0.336u_2^2$ \\
$B_2$ & 2 & $\mathcal{L}_{1,1}$ & $4.5e-2$ & $0.337u_1^2+0.339u_2^2$\\
& & & & $+0.741x_1^2+0.738x_2^2$ \\ 
$B_2$ & 12 & $\mathcal{L}_{1,1}$ & $3e-4$ & $x_1^2+x_2^2$\\
$B_2$ & 4 & $\mathcal{L}_{0,2}$ & $0$ & $(1-u_1^2-u_2^2)^2$\\
$B_2$ & 4 & $\mathcal{L}_{2,2}$ & $0$ & $(1-u_1^2-u_2^2)^2$
\end{tabular}
\caption{Solution $(L,\veps^\star)$ of the problem 
$(IOCP)$ associated with the problem \ref{pb:exittime}. $S_\mathcal{D}\subset X$ is the sample from which $\mathcal{D}$ is generated. \label{table:exittime}}
\end{table}

\begin{table}[!ht]
\setlength\tabcolsep{4pt}
\def\arraystretch{1.2}
\begin{tabular}{cc|ll} 
   $\deg \vphi$ & class of $L$ & $\veps^\star$ & L \\
 10 &  $\mathcal{L}_{0,1}$ & $8.31e-2$ & $0.313+0.339\,u_1^2+0.348\,u_2^2$ \\
 14 &  $\mathcal{L}_{0,1}$ & $4.36e-2$ & $0.323+0.338\,u_1^2+0.339\,u_2^2$ \\
 10 &  $\mathcal{L}_{0,2}$ & $0$ & $(1-u_1^2-u_2^2)^2$ \\
 10 &  $\mathcal{L}_{2,2}$ & $0$ & $(1-u_1^2-u_2^2)^2$ \\
  12 &  $\mathcal{L}_{1,1}$ & $1e-1$ & $m_1(x)^\intercal C_1^xm_1(x)+0.31u_1^2+$\\
 & & & $0.35+0.33u_2^2$, $\lVert C_1^x\rVert=O(1e-2)$\\
\end{tabular}
\caption{Solution $(L,\veps^\star)$ of the problem 
$(IOCP)$ associated with the problem \ref{pb:heisenberg}. \label{table:heisenberg}}
\end{table}

\subsection{Discussion}

Clearly, two Lagrangians are equivalent up to a multiplicative constant. 
This multiplicative constant is set by the value of the constant $C$ in 
\eqref{eq:constraint-jr}.
\begin{itemize}
\item Problem \ref{pb:LQR}: Table \ref{table:LQR} gives the couple
$(\veps^\star,L)$ part of the solution of the inverse problem (IOCP) 
for several values of the degree of the
polynomial $\vphi$ and several classes of polynomials for the Lagrangian $L$.
The value function $J_{\bar L}^\star$ associated with the Lagrangian $\bar L$ given in Problem \ref{pb:LQR}
 is quadratic in $x$, but it depends also on the time $t$ and we don't know, \textit{a priori}, if the dependance with
respect to $t$ of $J_{\bar L}^\star(t,x)$ is polynomial. This provides a reason why $L$ is not a good
approximation of $\bar L$ when $\vphi$ is of degree $4$ in $t,x$. 
Contrary to the method presented in \cite{Pauwels2014},
increasing the degree of $\vphi$ up to 10, we are able to get exactly $L=\bar L$ with $\veps^\star=0$, 
provided that the monomial basis for $L$ contains the monomials $u^2,x_1x_2,x_1^2,x_2^2$. 
If we remove some of these monomials, $\veps^\star$ is bigger. Thus it provides
a way to test whether the class $\mathcal{L}_{a,b}$ chosen for $L$ is relevant. The smaller is 
the optimal value $\veps^\star$ the more relevant is $\mathcal{L}_{a,b}$.

\item Problem \ref{pb:exitnorm}: The Lagrangian $\bar L(x,u)$ is 
polynomial in $x,u$ and the value function $J_{\bar L}(x)$
is a polynomial of degree $2$ in $x$.  Numerically, from Table \ref{table:exitnorm} 
taking $\vphi$ as a polynomial of
degree $2$ in $x$, the Lagrangian $L$ solution of (IOCP) corresponds exactly to $\bar L$
provided that the basis of $L$ contains the monomials $u_i^2,x_i^2,\,i=1,2$. As soon as
one of these monomials is removed, the error $\veps^\star$ increases, indicating
that the (smaller) class $\mathcal{L}_{a,b}$ for the Lagrangian $L$ is not relevant. 
\item 
Problem \ref{pb:exittime}: The control
associated with an optimal trajectory satisfies the algebraic equation $u_1^2+u_2^2=1$.
The Lagrangian $L=a\,(1+u_1^2+u_2^2),\, a\neq 0$ is equivalent to 
$\bar L=1$. 
From Table \ref{table:exittime}, if the basis of the Lagrangian $L$ contains the monomials
$1,u_i,u_i^2,\, i=1,2$, we  recover the minimum time problem \ref{pb:exittime}
with a reasonable value of $\veps^\star$ which decreases if the degree of $\vphi$ increases. Note
also that due to the singularity of the value function $J_{\bar L}^\star=1-\lVert x \rVert_2$ at $x=0$, 
$J_{\bar L}^\star$
is hard to approximate by a polynomial near $(0,0)$, hence the objective value $\veps^\star$ of (IOCP) 
with sample from  $S_{\mathcal{D}} = B_2\setminus (1/2\, B_2)$ is smaller.\\
Due to the fact that the optimal control $u=(u_1,u_2)$ satisfies $1-u_1^2-u_2^2=0$,
a natural Lagrangian is $L=(1-u_1^2-u_2^2)^2$ associated with the zero value function and
 obtained by our numerical procedure when the dictionary $\mathcal{L}_{a,b}$ contains 
 the monomials $1,u_i^2,u_i^4,\, i=1,2$. This ``natural" Lagrangian is purely mathematical with no physical meaning.

Note that Problem \ref{pb:exittime} corresponds to $(P)_{L_0},\, L_0=1=\bar L$ which is defined 
in Section \ref{sec:non-wellposedness} and  has the same optimal control law than
 problems $(P)_{L_p},\, p\in \mathbb{N}$.\\
Consider the problems $(P)_{L_1}$ and $(P)_{L_2}$. 
Note that, in these two problems, one function among $L_{p}$ and $J_{L_p}^\star,\,p=1,2$ is a 
polynomial while the other is not differentiable at $0$.
\begin{itemize}
\item $(P)_{L_1}$: The Lagrangian to recover is $L_1(x) = \lVert x \rVert_2$
associated with the value function $J_{L_1}^\star(x) = 1/2\, (1-\lVert x \rVert_2^2)$.\\
From Table \ref{table:exittime}, in the case $\deg \vphi=2$ and $L\in \mathcal{L}_{1,1}$,
the numerical solution $(\vphi,L)$ of the (IOCP) is $\vphi(x) = 1/2\,(1-x_1^2-x_2^2)$ and
$L(x,u)=0.337u_1^2+0.339u_2^2+0.741x_1^2+0.738x_2^2$  that we  identify to 
$L(x,u)=0.34+0.74\,(x_1^2+x_2^2)$ since $u_1^2+u_2^2=1$. 
$L$ corresponds to an approximation
on $X=B_2$ of the Lagrangian $L_1$.
Indeed, using \texttt{MATLAB}'s routine {\it fminunc}, a numerical solution of  
\begin{align*}
\hspace{-0.4cm} \min_{a\in \mathbb{R}^6} \int_X &\mid a_1 + a_2x_1
+a_3x_2+a_4x_1^2+a_5x_1x_2+a_6x_2^2\\
& - \sqrt{x_1^2+x_2^2} \mid \,\dd x_1 \dd x_2 
\end{align*}
is $a^\star=(0.317,0,0,0.7321,0,0.7321)$ whose values  are close to the coefficients of $L$.
 
\item $(P)_{L_2}$: The Lagrangian to recover is $L_2(x) = \lVert x \rVert_2^2$
associated with the value function $J_{L_2}^\star(x) = 1/3\,(1-\lVert x \rVert_2^3)$.\\
From Table \ref{table:exittime}, in the case $\deg \vphi=12$ and $L\in \mathcal{L}_{1,1}$, 
the numerical solution $L$ of the (IOCP) is $L(x,u)=x_1^2+x_2^2$.
In this case, $L$ corresponds to  the Lagrangian $L_2$ 
associated with the  value function $J_{L_2}^\star$.
\end{itemize}


\item 
Problem \ref{pb:heisenberg}: The optimal control satisfies $1-u_1^2-u_2^2=0$. From Table \ref{table:heisenberg},
if the basis contains of $\mathcal{L}_{a,b}$ the monomials $1,u_i^2,\, i=1,2$, we recover the Lagrangian of 
the minimum time since the Lagrangian $L=1+u_1^2+u_2^2$ is equivalent
to the Lagrangian $2\bar L$.
If we remove some monomials among $1,u_i^2,\, i=1,2$, the value of $\veps^\star$
increases, which again invalidates the choice of the smaller class $\mathcal{L}_{a,b}$ for $L$. 

If the basis contains the monomials $1,u_i^2,u_i^4,\, i=1,2$ we recover
the Lagrangian $L=(1-u_1^2-u_2^2)^2$  associated with the zero value function
(see Section \ref{sec:non-wellposedness}).\\

It is important to point out that we are able to recover the Lagrangian $\bar L$ without 
adding  a regularization parameter which was necessary in \cite{Pauwels2016}. 
Indeed in \cite{Pauwels2016} the authors adopted a more general point of view where 
$\mathcal{L}=\mathbb{R}[x,u]_{d}$ with $d$ relatively large.
They minimize a tradeoff between the error $\veps^\star$  and the value of a regularization
parameter controlling the {\it sparsity} of the polynomial $L$: the more one asks for sparsity, the
larger  is the resulting optimal error $\veps^\star$ and vice versa. 
From an applicative point of view, the main drawback of this method 
 is that we do not know an {\it a priori} value of the regularization parameter
for which the Lagrangian $\bar L$ is recovered. In this sense,
the method described in this paper -- where a particular a class of Lagrangian functions is imposed --
seems more suitable for applications. 


\end{itemize}

\section{Conclusion}
Hamilton-Jacobi-Bellman conditions  provide
a global-optimality certificate which is a powerful and ideal tool for solving inverse optimal control problems.
While intractable in full generality, it can be implemented 
when data of the control system are polynomials and semi-algebraic sets, in the spirit of \cite{Pauwels2016}.
Indeed powerful positivity certificates from real algebraic geometry allow to implement a relaxed version of HJB
which imposes (i) positivity constraints on the unknown value function and Lagrangian, and (ii) a guaranteed $\veps$-global optimality for all the given trajectories of 
the database $\mathcal{D}$.
In doing so we can solve efficiently inverse optimal control problems of
relatively small dimension.

Compared to \cite{Pauwels2014}, our method is less general as the search of an optimal Lagrangian is done on some (restricted) class of Lagrangians defined {\it a priori}. On the other hand, the search is more efficient with {\it no} need of a regularizing parameter to control the sparsity. 
When considering the same examples as in \cite{Pauwels2016}, we obtain more accurate estimations on Lagrangians without a sparsity constraint on the Lagrangian. 
Finally, as an additional and interesting feature of the method, in a way a small resulting optimal value $\veps^\star$ validates the choice of the  class of Lagrangians considered, while a relatively large $\veps^\star$ is a strong indication that the class is too small.

Future investigations will try to determine if our numerical scheme provides 
interesting results on practical inverse problems, especially those coming 
from humanoid robotics, an important  field of application \cite{laumont2010} for inverse optimal control.

\section{ACKNOWLEDGMENTS}

This work was supported by the European Research Council (ERC)
through an ERC-Advanced Grant for the TAMING project.
The authors would like to thank Edouard Pauwels for     
 helpful discussions based on upstream works.

\FloatBarrier

\end{document}